%% file: RD_Algo_arXive.tex
\documentclass[a4paper]{article}
\usepackage{lmodern}
\pdfoutput=1
\usepackage{graphicx,wrapfig}
\usepackage{lipsum}
\usepackage{amsmath,amssymb,mathrsfs}
\usepackage{hyperref}
\usepackage{url}
\usepackage{mathtools}
\usepackage{changepage}
\usepackage[ruled,vlined]{algorithm2e}
\usepackage{amsmath}
\usepackage{multirow}
\usepackage{wrapfig}
\usepackage{subfig}
\usepackage{color}
\usepackage{epsfig,enumerate,amsmath,amsfonts,amssymb,amsthm,mathrsfs,ifpdf}
\usepackage{indentfirst,relsize}
\usepackage{setspace,graphicx}
\usepackage{latexsym}
\usepackage[all]{xy}
\usepackage[dvipsnames]{pstricks}
\usepackage{pst-grad} % For gradients
\usepackage{pst-plot} % For axes
\usepackage[margin = 2.50cm]{geometry}

\usepackage{authblk} % to add author affiliations
\usepackage{algpseudocode}
\usepackage{color,soul}
\usepackage{enumitem}% http://ctan.org/pkg/enumitem

\usepackage{orcidlink}% for orcid id

\newcommand{\remove}[1]{}

\newtheorem{theorem}{Theorem}%[section]
\newtheorem{lemma}[theorem]{Lemma}

\newtheorem{corollary}[theorem]{Corollary}

\newtheorem{remark}{Remark}

\usepackage{todonotes}
\usepackage[most]{tcolorbox}

\title{Roman domination on subclasses of bipartite graphs}
\author[1]{Gautam K. Das\footnote{gkd@iitg.ac.in}}
\author[2]{Sasmita Rout\footnote{sasmitarout20.84@gmail.com, sasmita.r@srmap.edu.in}}
\author[3]{Kamal Santra \orcidlink{0009-0006-5997-1452} \footnote{kamal.7.2013@gmail.com, kamal.santra@iitg.ac.in}}
\affil[1,3]{Department of Mathematics\\
	
	Indian Institute of Technology Guwahati\\
	
	Guwahati, 781039, Assam, India}

\affil[2]{Department of Computer Science and Engineering \\
	
	SRM University-AP\\
	
	Amaravati, 522240, Andhra Pradesh, India}
\date{}

\begin{document}

	\maketitle
	\begin{abstract}
	The Roman Domination Problem (RDP) on a simple, finite, undirected graph \(G=(V,E)\) asks for a labeling function \(f:V\rightarrow\{0,1,2\}\) such that every vertex assigned value \(0\) is adjacent to at least one vertex assigned value \(2\). The objective is to minimize the total weight \(\sum_{v\in V} f(v)\), and this minimum value is called the Roman domination number of \(G\), denoted by \(\gamma_R(G)\). Since the RDP is NP-complete for bipartite graphs, a natural direction is to study its complexity on restricted subclasses of bipartite graphs. The problem remains NP-complete even under strong structural restrictions, such as star-convex and comb-convex bipartite graphs. Therefore, identifying the borderline subclasses where the problem changes from NP-complete to polynomial-time solvable remains an important challenge.
	
	In this paper, we investigate the RDP on convex bipartite graphs and on their superclass, chordal bipartite graphs. First, we present a dynamic programming algorithm for convex bipartite graphs. The algorithm uses the interval ordering of one bipartition class and keeps a compact boundary state, which is sufficient to control the domination requirements of both processed and future vertices. This gives an \(O(n^3)\)-time algorithm for computing \(\gamma_R(G)\) on an \(n\) vertex convex bipartite graph. In contrast, we prove that the decision version of the RDP is NP-complete on chordal bipartite graphs by a polynomial reduction from \textsc{Dominating Set} on chordal bipartite graphs. Thus, our results show a clear separation between the tractability of convex bipartite graphs and the hardness of the larger chordal bipartite class.
	\end{abstract}

	{\bf Keywords.}
	Roman Domination, Convex Bipartite Graphs, Polynomial Time, Dynamic Programming, Chordal Bipartite graphs, NP-Complete

	%%%%%%%%%%%%%%%%%%%%%%%%%%%%%%%%%%%%%%%%%%%%%%%%%%%%%%%%%%%%%%%%%%%%%%%%%%%%%%%%%%%%%%%%%%%%%%%%%%%%%%

\input{Introduction}

\input{Preliminaries}

\input{RD_Convex}

\input{RD_Chordal_Bipartite}

\section{Conclusion}\label{sec:conclusion}
In this paper, we studied the Roman Domination Problem on two important subclasses of bipartite graphs: convex bipartite graphs and chordal bipartite graphs. For convex bipartite graphs, we gave a dynamic programming algorithm that follows the interval ordering of one bipartition class. The algorithm keeps only two pieces of boundary information: the rightmost processed vertex of \(X\) assigned value \(2\), and the earliest processed vertex of \(X\) assigned value \(0\) that is still waiting to be dominated. This compact state is sufficient because every future vertex of \(Y\) sees the already processed part of \(X\) as a suffix. Hence, the Roman domination number of an \(n\)-vertex convex bipartite graph can be computed in \(O(n^3)\) time.

We also showed that this tractability does not extend to chordal bipartite graphs. By reducing from \textsc{Dominating Set} on chordal bipartite graphs, we proved that the decision version of \textsc{Roman Domination} remains NP-complete on this class. Thus, convex bipartite graphs form a polynomially solvable subclass, while the larger class of chordal bipartite graphs is already computationally hard.

These results give a clearer picture of the complexity of Roman domination in bipartite graph classes. It would be interesting to study whether similar dynamic programming ideas can be applied to other structured bipartite graphs, such as circular-convex and triad-convex bipartite graphs.

\bibliographystyle{plain}% the mandatory bibstyle
\bibliography{RD_ref}

\end{document}

%% file: Introduction.tex
\section{Introduction}\label{sec:introduction}
Domination is one of the most widely studied topics in graph theory, both for its rich theoretical structure and its many applications in computer science. It naturally models problems in network design, facility-location planning, mobile networks, wireless sensor networks, and social network analysis. For a graph \(G=(V,E)\), a set \(S\subseteq V\) is called a dominating set if every vertex outside \(S\) has at least one neighbour in \(S\). In other words, for every \(v\in V\setminus S\), there exists a vertex \(u\in S\) such that \(uv\in E\). A dominating set of minimum cardinality is called a minimum dominating set, and its size is the domination number of \(G\), denoted by \(\gamma(G)\). The classical domination problem has been intensively studied; see, for example, \cite{haynes2020topics,haynes1998fundamentals,hedetniemi1991bibliography}. Over the years, many variants of domination have also been introduced to model different types of constraints. These include total domination \cite{cockayne1980total,jena2021total}, semi-total domination \cite{marcon2014semitotal,rout2026semi}, Roman domination \cite{cockayne2004roman,rout2025total}, Italian domination \cite{henning2017italian,haynes2020graphs}, and several others.

In this paper, we focus on Roman domination. A Roman dominating function, or RDF, on a simple undirected graph \(G=(V,E)\) is a function \(f:V\rightarrow\{0,1,2\}\) such that every vertex \(v\) with \(f(v)=0\) has at least one neighbor \(u\) with \(f(u)=2\). The weight of an RDF \(f\) is \(W(f)=\sum_{v\in V}f(v)\). If \(\mathbb{F}_G\) denotes the set of all RDFs of \(G\), then the Roman domination number of \(G\), denoted by \(\gamma_R(G)\), is defined as \(\gamma_R(G)=\min_{f\in\mathbb{F}_G} W(f)\). The Roman Domination Problem asks for an RDF of minimum weight.

The idea behind Roman domination is usually explained through a historical defense strategy. A vertex assigned value \(0\) represents an undefended location, a vertex assigned value \(1\) represents a location with one unit of defense, and a vertex assigned value \(2\) represents a location with two units of defense. The condition says that every undefended location must be adjacent to a location with two units, so that one unit can be moved there if needed. The problem was formally introduced by Cockayne et al.~\cite{cockayne2004roman}, inspired by Stewart's article on defending the Roman Empire with limited military resources~\cite{stewart1999defend}.

From the computational point of view, Roman domination is hard in general. The problem is NP-complete for general graphs \cite{dreyer2000applications}, and this hardness remains true even for several restricted graph classes, including bipartite graphs, split graphs, and planar graphs \cite{cockayne2004roman,mcRae2002private}. Because of this, a natural direction is to study the problem on special graph classes and identify where the boundary between NP-complete and polynomial-time solvable cases lies. Many structural and algorithmic results for Roman domination and its variants are known for different graph families \cite{cera2026roman,chellali2020varieties,favaron2009roman,xueliang2009roman,liu2013roman,shao2019total}.

In this work, we are particularly interested in subclasses of bipartite graphs. Let \(G=(X\cup Y,E)\) be a bipartite graph. The graph \(G\) is called tree-convex bipartite if there exists a tree \(T=(X,F)\) such that, for every vertex \(y\in Y\), the neighbourhood of \(y\) induces a subtree of \(T\). From a structural point of view, tree-convex bipartite graphs are well behaved, since they can be recognized, together with a corresponding tree representation, in linear time \cite{sheng2012review}. Several important subclasses arise by restricting the shape of the tree \(T\). If \(T\) is a star, a comb, or a path, then we obtain star-convex, comb-convex, and convex bipartite graphs, respectively.

The complexity of Roman domination varies significantly across these bipartite subclasses. The problem is NP-complete for star-convex and comb-convex bipartite graphs \cite{padamutham2020algorithmic}. On the positive side, efficient algorithms are known for several related graph classes. Linear-time algorithms have been obtained for interval graphs and cographs \cite{liedloff2008efficient}, and polynomial-time algorithms are known for bounded-treewidth graphs, chain graphs, threshold graphs, \(D\)-octopus graphs, and AT-free graphs \cite{liedloff2008efficient,padamutham2020algorithmic}. These results show that even small structural changes may lead to different computational behaviour.

The study of Roman domination on convex-type bipartite graph classes is also motivated by known results for the classical domination problem. For domination, polynomial-time algorithms are known for convex, circular-convex, and triad-convex bipartite graphs, with running times \(O(n^2)\), \(O(n^5)\), and \(O(n^8)\), respectively \cite{bang1999domination,pandey2019domination}. On the other hand, domination is NP-complete for several more general convex-type bipartite classes \cite{chen2016complexity,muller1987np}. Since Roman domination is closely related to domination, it is natural to ask whether similar tractability boundaries appear for RDP.

\subsection{Our Contribution}\label{sec:our_contribution}

Motivated by this complexity landscape, we study the Roman Domination Problem on convex bipartite graphs and chordal bipartite graphs. These two graph classes are closely related in the sense that convex bipartite graphs form a structured subclass within the broader world of chordal bipartite graphs. However, our results show that the two classes behave very differently with respect to Roman domination.

Figure~\ref{fig:F1_DifferentGraphsNPC_Status} summarizes the complexity status of RDP in several graph classes. In the figure, graph classes shown in red with solid borders are NP-complete, graph classes shown in blue with dashed borders are polynomial-time solvable, and graph classes shown in black with dotted borders are open. The classes settled in this paper are highlighted in green.

\begin{figure}[h!]
	\centering
	\includegraphics[width=0.9\textwidth]{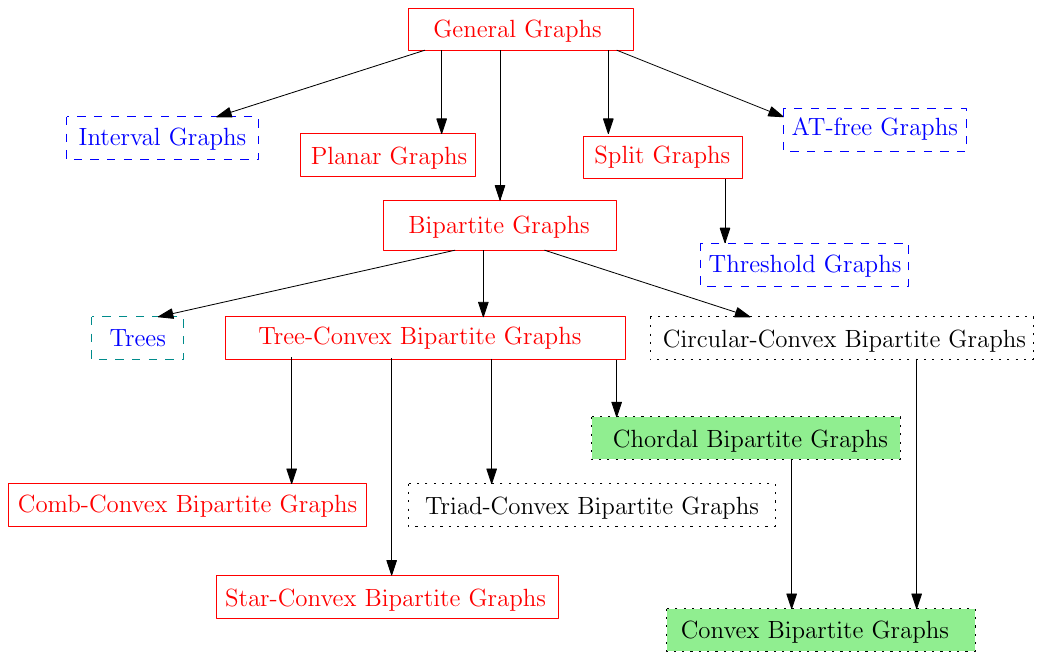}
	\caption{Complexity status of RDP in different graph classes.}
	\label{fig:F1_DifferentGraphsNPC_Status}
\end{figure}

Our first result is a polynomial-time algorithm for convex bipartite graphs. Given a convex ordering of one bipartition class \(X=\{x_1,x_2,\ldots,x_m\}\), every vertex of the other bipartition class has an interval neighbourhood in \(X\). We exploit this ordering by processing the graph from left to right. The dynamic program keeps only two boundary parameters: the rightmost processed vertex of \(X\) assigned value \(2\), and the earliest processed vertex of \(X\) assigned value \(0\) that is still waiting to be dominated. This compact state is sufficient because every future vertex of \(Y\) sees the already processed part of \(X\) as a suffix. Using this idea, we compute the Roman domination number of an \(n\)-vertex convex bipartite graph in \(O(n^3)\) time.

Our second result is a hardness result for chordal bipartite graphs. We prove that the decision version of the Roman Domination Problem remains NP-complete on chordal bipartite graphs. The proof is by a polynomial reduction from \textsc{Dominating Set} on chordal bipartite graphs. The reduction attaches one pendant vertex to each original vertex and relates the Roman domination number of the constructed graph to the domination number of the original graph.

Thus, our results give a clear separation between convex bipartite graphs and chordal bipartite graphs. Convex bipartite graphs admit an efficient dynamic programming algorithm, while the larger class of chordal bipartite graphs remains computationally hard.

The rest of the paper is organized as follows. Section~\ref{sec:preliminaries} recalls the basic definitions and notation used throughout the paper. Section~\ref{sec:RD_in_CB} presents the dynamic programming algorithm for convex bipartite graphs. Section~\ref{sec:NPC_ChordalBG} proves NP-completeness on chordal bipartite graphs. Finally, Section~\ref{sec:conclusion} concludes the paper with some directions for future research.

%% file: Preliminaries.tex
\section{Preliminaries}
\label{sec:preliminaries}

All graphs considered in this paper are finite, simple, and undirected. For a graph \(G\), we denote its vertex set and edge set by \(V(G)\) and \(E(G)\), respectively. For a vertex \(v\in V(G)\), the open neighbourhood of \(v\) is denoted by \(N_G(v)\), or simply by \(N(v)\) when the graph is clear from the context. A vertex of degree one is called a pendant vertex.

A set \(D\subseteq V(G)\) is called a dominating set of \(G\) if every vertex in \(V(G)\setminus D\) has at least one neighbour in \(D\). The domination number of \(G\), denoted by \(\gamma(G)\), is the minimum cardinality of a dominating set of \(G\). The decision version of \textsc{Dominating Set} asks, given a graph \(G\) and an integer \(k\), whether \(G\) has a dominating set of size at most \(k\).

A Roman dominating function, or RDF, of a graph \(G\) is a function \(f:V(G)\rightarrow\{0,1,2\}\) such that every vertex \(v\) with \(f(v)=0\) has a neighbor \(u\in N(v)\) with \(f(u)=2\). The weight of \(f\) is \(w(f)=\sum_{v\in V(G)} f(v)\). The Roman domination number of \(G\), denoted by \(\gamma_R(G)\), is the minimum weight of an RDF of \(G\). The decision version of \textsc{Roman Domination} asks, given a graph \(G\) and an integer \(k\), whether \(\gamma_R(G)\leq k\). Since the Roman domination number is additive over connected components, it is enough for the algorithmic part to describe the method for connected graphs.

Let \(G=(X\cup Y,E)\) be a bipartite graph with bipartition \((X,Y)\). We say that \(G\) is convex with respect to \(X\) if there is an ordering \(X=\{x_1,x_2,\ldots,x_m\}\) such that, for every vertex \(y\in Y\), the neighbourhood of \(y\) is consecutive in this ordering. Thus, for every \(y\in Y\), there are integers \(l(y)\) and \(r(y)\), with \(1\leq l(y)\leq r(y)\leq m\), such that \(N(y)=\{x_{l(y)},x_{l(y)+1},\ldots,x_{r(y)}\}\). We call \(l(y)\) and \(r(y)\) the left and right endpoints of \(y\), respectively. Throughout the algorithmic part, a convex bipartite graph is assumed to be given together with such a convex ordering and the endpoints \(l(y)\) and \(r(y)\) for all \(y\in Y\).

A bipartite graph is called a chordal bipartite graph if it has no induced cycle of length at least six. Equivalently, every induced cycle in a chordal bipartite graph has length four. This class is used in the hardness part of the paper.

%% file: RD_Convex.tex
\section{Roman Domination in Convex Bipartite Graphs}\label{sec:RD_in_CB}

In this section, we present a dynamic programming algorithm for the Roman Domination Problem on convex bipartite graphs. Let \(G=(X\cup Y,E)\) be a nontrivial connected bipartite graph, where \(X=\{x_1,x_2,\ldots,x_m\}\) and \(Y=\{y_1,y_2,\ldots,y_q\}\). We assume that \(G\) is convex with respect to the ordering \(x_1,x_2,\ldots,x_m\) of \(X\). Thus, for every vertex \(y\in Y\), the neighbourhood of \(y\) is consecutive in this ordering. We write \(N(y)=\{x_{l(y)},x_{l(y)+1},\ldots,x_{r(y)}\}\), where \(l(y)\) and \(r(y)\) are the left and right endpoints of the interval corresponding to \(y\). Since \(G\) is connected and nontrivial, no vertex of \(Y\) is isolated.

The algorithm scans the graph from left to right according to the ordering of \(X\). Whenever all neighbours of a vertex \(y\in Y\) have been processed, the algorithm decides whether \(y\) is already properly dominated. On the other hand, a processed vertex of \(X\) assigned value \(0\) may still be dominated later by some unprocessed vertex of \(Y\) assigned value \(2\). The dynamic program, therefore, stores only the boundary information needed to remember such unfinished vertices.

\subsection{Processing Order and States}

For each \(i\in\{1,\ldots,m\}\), let \(Y_i=\{y\in Y:r(y)=i\}\). Thus, \(Y_i\) contains exactly those vertices of \(Y\) whose interval neighbourhoods end at \(x_i\). The algorithm processes the graph in the order \(x_1,Y_1,x_2,Y_2,\ldots,x_m,Y_m\). If some \(Y_i\) is empty, then no vertex of \(Y\) is processed immediately after \(x_i\).

This ordering is useful because, when a vertex \(y\in Y_i\) is processed, all its neighbours have already appeared. Indeed, since \(r(y)=i\), we have \(N(y)=\{x_{l(y)},x_{l(y)+1},\ldots,x_i\}\). Hence, if \(y\) is assigned value \(0\), the algorithm can immediately check whether \(y\) has a neighbour of value \(2\) among the processed vertices of \(X\). In contrast, when a vertex \(x_i\) is processed, it may still be dominated later by a vertex \(y\in Y_j\) with \(j\geq i\). Therefore, the dynamic program must keep track of those processed vertices of \(X\) that currently have value \(0\) but are still waiting to be dominated by future vertices of \(Y\).

For \(i\in\{1,\ldots,m\}\), define \(X_i=\{x_1,\ldots,x_i\}\) and \(Y_{\leq i}=\{y\in Y:r(y)\leq i\}\). After the algorithm has processed \(X_i\cup Y_{\leq i}\), we call a partial assignment feasible if every vertex \(y\in Y_{\leq i}\) assigned value \(0\) is already adjacent to some vertex of \(X_i\) assigned value \(2\). We do not require the vertices of \(X_i\) assigned value \(0\) to be already dominated, because such vertices may still be dominated by vertices of \(Y\) that will be processed later.

Let \(f\) be a feasible partial assignment on \(X_i\cup Y_{\leq i}\). We define \(s(f)\) to be the largest index \(t\leq i\) such that \(f(x_t)=2\). If no processed vertex of \(X\) has value \(2\), then \(s(f)=0\). Thus, \(s(f)\) records the rightmost processed vertex of \(X\) assigned value \(2\).

Next, define the pending set \(P(f)\). An index \(t\leq i\) belongs to \(P(f)\) if \(f(x_t)=0\) and \(x_t\) has no neighbour in \(Y_{\leq i}\) assigned value \(2\). Thus, \(P(f)\) consists exactly of the processed vertices of \(X\) that currently have value \(0\) but are not yet dominated by any processed vertex of \(Y\). These vertices are allowed to remain temporarily undominated since they may still be dominated by future vertices of \(Y\). If \(P(f)\neq\emptyset\), let \(p(f)=\min P(f)\); otherwise, let \(p(f)=p_\infty\), where \(p_\infty=m+1\). Hence, \(p(f)\) records the earliest pending vertex of \(X\), and \(p_\infty\) means that no pending vertex exists.

The state of the dynamic program is the pair \((s,p)\). The table stores, for each state \((s,p)\), the minimum weight of a feasible partial assignment having that state. Notice that \(p_\infty\) is a state value, whereas \(+\infty\) is used only as a table value for unreachable states.

We now justify why only the earliest pending vertex is needed. Consider any future vertex \(y\in Y\setminus Y_{\leq i}\). Since \(G\) is convex with respect to the ordering of \(X\), the set \(N(y)\cap X_i\) is either empty or a suffix of \(X_i\). More precisely, if \(l(y)\leq i\), then \(N(y)\cap X_i=\{x_{l(y)},x_{l(y)+1},\ldots,x_i\}\); otherwise, \(N(y)\cap X_i=\emptyset\). Therefore, if a future vertex \(y\) assigned value \(2\) dominates the earliest pending vertex \(x_p\), then it also dominates every pending vertex with index larger than \(p\). If it does not dominate \(x_p\), then \(x_p\) remains pending. Thus, for all future decisions, the whole pending set is represented by its minimum index \(p\).

Before the first vertex is processed, the table contains only the empty assignment. This assignment has weight \(0\), no processed vertex of \(X\) assigned value \(2\), and no pending vertex. Therefore, we initialize the table by setting \(D(0,p_\infty)=0\), and all other entries are set to \(+\infty\).

\subsection{Transition Rules}

Suppose the table has been computed after processing \(X_{i-1}\cup Y_{\leq i-1}\). We first process the next vertex \(x_i\). For every reachable state \((s,p)\) with \(D(s,p)<+\infty\), we consider the three possible values of \(x_i\).

If \(f(x_i)=0\), then \(x_i\) becomes pending. Indeed, every already processed vertex of \(Y\) has right endpoint at most \(i-1\), and hence no processed vertex of \(Y\) is adjacent to \(x_i\). Thus, the new earliest pending index is \(p_0=\min\{p,i\}\), and the weight does not increase.

If \(f(x_i)=1\), then \(x_i\) is safe by itself. It does not become pending, and it does not change the rightmost processed vertex of \(X\) assigned value \(2\). Hence, the state remains \((s,p)\), and the weight increases by \(1\).

If \(f(x_i)=2\), then \(x_i\) is safe and becomes the rightmost processed vertex of \(X\) assigned value \(2\). Therefore, the new state is \((i,p)\), and the weight increases by \(2\).

After all transitions for \(x_i\) have been performed, we process the vertices in \(Y_i\) one by one. Let \(y\in Y_i\), and let \(l=l(y)\). Since \(r(y)=i\), all neighbours of \(y\) have already been processed, and \(N(y)=\{x_l,x_{l+1},\ldots,x_i\}\). For every reachable state \((s,p)\) with \(D(s,p)<+\infty\), we again consider the three possible values of \(y\).

If \(f(y)=0\), then \(y\) must already be dominated by a processed vertex of \(X\) assigned value \(2\). Since \(s\) is the largest index of a processed vertex of \(X\) assigned value \(2\), such a neighbour exists in \(N(y)\) if and only if \(s\geq l\). If \(s\geq l\), then \(x_s\in N(y)\) and \(f(x_s)=2\), so the choice \(f(y)=0\) is valid and the state does not change. If \(s<l\), then no processed vertex of \(X\) assigned value \(2\) lies in \(N(y)\), and this choice is infeasible.

If \(f(y)=1\), then \(y\) is safe by itself. A vertex assigned value \(1\) does not dominate pending vertices of \(X\) in the Roman domination condition. Therefore, the state remains \((s,p)\), and the weight increases by \(1\).

If \(f(y)=2\), then \(y\) dominates every pending vertex of \(X\) that lies in its interval \(N(y)=\{x_l,\ldots,x_i\}\). If \(p\neq p_\infty\) and \(p\geq l\), then the earliest pending vertex \(x_p\) lies in \(N(y)\). Every other pending vertex has index at least \(p\), and at most \(i\), and hence it also lies in \(N(y)\). Therefore, all pending vertices are dominated, and the new pending value becomes \(p_\infty\). If \(p=p_\infty\), then there is no pending vertex. If \(p<l\), then the earliest pending vertex \(x_p\) is not adjacent to \(y\), and hence it remains pending. Thus, if \(p\neq p_\infty\) and \(p\geq l\), we set \(p_2=p_\infty\); otherwise, we set \(p_2=p\). The new state is \((s,p_2)\), and the weight increases by \(2\).

\subsection{Algorithm}

\begin{algorithm}[H]
	\small
	\DontPrintSemicolon
	\caption{\textsc{Roman-Convex-Bipartite}}
	\KwIn{A connected convex bipartite graph \(G=(X\cup Y,E)\), where \(X=\{x_1,\ldots,x_m\}\), together with the interval representation \(N(y)=\{x_{l(y)},\ldots,x_{r(y)}\}\) for every \(y\in Y\).}
	\KwOut{The Roman domination number \(\gamma_R(G)\).}
	
	Let \(p_\infty=m+1\)\;
	\For{\(i=1\) \KwTo \(m\)}{
		Initialize \(Y_i\leftarrow\emptyset\)\;
	}
	\ForEach{\(y\in Y\)}{
		Append \(y\) to \(Y_{r(y)}\)\;
	}
	
	Set \(D(0,p_\infty)=0\), and set all other entries to \(+\infty\)\;
	
	\For{\(i=1\) \KwTo \(m\)}{
		Set all entries of \(D'\) to \(+\infty\)\;
		
		\ForEach{state \((s,p)\) with \(D(s,p)<+\infty\)}{
			\(p_0=\min\{p,i\}\)\;
			update \(D'(s,p_0)\) with \(D(s,p)\)\tcp*{\(f(x_i)=0\)}
			
			update \(D'(s,p)\) with \(D(s,p)+1\)\tcp*{\(f(x_i)=1\)}
			
			update \(D'(i,p)\) with \(D(s,p)+2\)\tcp*{\(f(x_i)=2\)}
		}
		
		Set \(D=D'\)\;
		
		\ForEach{\(y\in Y_i\)}{
			Set all entries of \(D'\) to \(+\infty\)\;
			Let \(l=l(y)\)\;
			
			\ForEach{state \((s,p)\) with \(D(s,p)<+\infty\)}{
				\If{\(s\geq l\)}{
					update \(D'(s,p)\) with \(D(s,p)\)\tcp*{\(f(y)=0\)}
				}
				
				update \(D'(s,p)\) with \(D(s,p)+1\)\tcp*{\(f(y)=1\)}
				
				\eIf{\(p\neq p_\infty\) and \(p\geq l\)}{
					\(p_2=p_\infty\)\;
				}{
					\(p_2=p\)\;
				}
				
				update \(D'(s,p_2)\) with \(D(s,p)+2\)\tcp*{\(f(y)=2\)}
			}
			
			Set \(D=D'\)\;
		}
	}
	
	\Return{\(\min\{D(s,p_\infty):0\leq s\leq m\}\)}\;
\end{algorithm}

The pseudocode above uses the following table convention. The table \(D\) stores the best values for the states reached after the part of the graph processed so far. A state \((s,p)\) is considered only if it is reachable at the current stage, that is, only if \(D(s,p)<+\infty\). Such a state corresponds to at least one feasible partial assignment. States with value \(+\infty\) are infeasible at that stage and are ignored.

For each processed vertex, the algorithm creates a temporary table \(D'\), whose entries are initially set to \(+\infty\). All feasible transitions from the current table \(D\) are then written into \(D'\). In the pseudocode, the phrase ``update \(D'(a,b)\) with \(w\)'' means \(D'(a,b)=\min\{D'(a,b),w\}\). Once all transitions for the current vertex have been completed, \(D'\) replaces \(D\) as the current table.

\subsection{Correctness Proof}

We now prove the correctness of Algorithm \textsc{Roman-Convex-Bipartite}. Recall that a state \((s,p)\) summarizes the boundary information needed for the remaining computation. Here \(s\) is the rightmost processed vertex of \(X\) assigned value \(2\), and \(p\) is the earliest processed vertex of \(X\) that is still pending. If no such pending vertex exists, then \(p=p_\infty\).

A state with \(p\neq p_\infty\) is allowed during the intermediate stages of the algorithm. Such a state means that the vertex \(x_p\) is currently undominated but may still be dominated later by some unprocessed vertex of \(Y\) assigned value \(2\). However, after all vertices have been processed, no future vertex remains. Hence, a final state with \(p\neq p_\infty\) cannot represent a Roman dominating function. Therefore, at the end of the algorithm, only states of the form \((s,p_\infty)\) are accepted.

\begin{lemma}\label{RD_convex_correctness_lemma}
	After each processing step, for every state \((s,p)\), the current table entry \(D(s,p)\) has the following meaning. If there exists a feasible partial assignment with state \((s,p)\), then \(D(s,p)\) is the minimum weight among all such assignments. If no feasible partial assignment realizes \((s,p)\), then \(D(s,p)=+\infty\).
\end{lemma}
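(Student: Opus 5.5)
We argue by induction on the number of processing steps, where a step processes either a single vertex \(x_i\) or a single vertex \(y\in Y_i\), in the order \(x_1,Y_1,x_2,Y_2,\ldots\). The base case is the initialization. Before anything is processed, the only partial assignment is the empty one, which is feasible and has state \((0,p_\infty)\) and weight \(0\). Every other state is unrealized, and this matches \(D(0,p_\infty)=0\) with all other entries \(+\infty\).

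For the inductive step, let \(V_{\mathrm{old}}\) be the processed set before the step and \(V_{\mathrm{new}}=V_{\mathrm{old}}\cup\{v\}\). The key structural claim is a \emph{state transition lemma}. Let \(g\) be a feasible partial assignment on \(V_{\mathrm{old}}\) with state \((s,p)\), and let \(a\in\{0,1,2\}\). Then two things hold:
\begin{itemize}
\item whether the extension \(g\cup\{v\mapsto a\}\) is feasible depends only on \((s,p)\) and \(a\);
\item when it is feasible, its state is exactly the value the algorithm writes, namely \((s,\min\{p,i\})\), \((s,p)\) or \((i,p)\) for \(v=x_i\), and \((s,p)\), \((s,p)\) or \((s,p_2)\) for \(v=y\in Y_i\).
\end{itemize}
Conversely, every feasible assignment on \(V_{\mathrm{new}}\) restricts to a feasible assignment on \(V_{\mathrm{old}}\). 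This holds because feasibility only constrains vertices of \(Y_{\le i}\) with value \(0\), and their neighbourhoods lie entirely in the processed part of \(X\).

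Given the transition lemma, minimality follows by a standard exchange argument. Fix a target state \((s',p')\) and take any feasible \(h\) on \(V_{\mathrm{new}}\) with that state. Its restriction \(g\) has some state \((s,p)\), so by induction \(D(s,p)\le w(g)\). The algorithm's update from \((s,p)\) with label \(h(v)\) then gives \(D'(s',p')\le w(g)+h(v)=w(h)\). In the other direction, every finite value written into \(D'(s',p')\) comes from some state \((s,p)\). By induction that state is realized by an assignment \(g\) of weight exactly \(D(s,p)\), and the lemma shows that extending \(g\) yields a feasible assignment of state \((s',p')\) with weight equal to the written value. Hence \(D'(s',p')\) is attained. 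An entry is \(+\infty\) exactly when no update reaches it, which by the first direction means no feasible assignment has that state.

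The main obstacle is verifying the transition lemma for the pending component, especially the case \(f(y)=2\).

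For \(x_i\), the argument is easy. No processed \(y\) is adjacent to \(x_i\), because all processed \(y\) have \(r(y)\le i-1\). So \(P\) gains \(i\) exactly when \(f(x_i)=0\), and \(s\) changes only when \(f(x_i)=2\).

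For \(y\in Y_i\) with \(f(y)=0\), feasibility is equivalent to the existence of \(t\in[l,i]\) with \(f(x_t)=2\). This is equivalent to \(s\ge l\), since \(s\le i\) and \(s\) is the maximum such index.

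For \(f(y)=2\), the new pending set is \(P\setminus[l,i]\). Since \(P\subseteq[1,i]\), this equals \(P\cap[1,l-1]\). Its minimum is \(p\) if \(p<l\). It is empty, giving \(p_\infty\), if \(p\ge l\) or \(P=\emptyset\). So the new minimum is a function of \(p\) alone, which is precisely the suffix argument. I would state this interval-suffix observation explicitly as the crux. I would also check that the value \(s\) never becomes stale, since \(s\) is unaffected by \(Y\)-vertices.
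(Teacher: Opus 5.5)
Your proof is correct and follows essentially the same route as the paper. Both argue by induction over the processing order with a case analysis on the vertex type and its label, using the criterion \(s\ge l\) for \(f(y)=0\) and the interval observation \(P\setminus[l,i]=P\cap[1,l-1]\) for \(f(y)=2\). Your restriction property, transition lemma and two-sided exchange argument are a cleaner formalization of the paper's claims that every feasible extension is represented, every retained transition is valid, and the state \((s,p)\) suffices.
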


\begin{proof}
	We prove the lemma by induction on the processing order.
	
	Before any vertex is processed, the only partial assignment is the empty assignment. Its weight is \(0\). No vertex of \(X\) has been processed, so no processed vertex of \(X\) is assigned value \(2\). Also, there is no pending vertex. Hence, the initial state is \((0,p_\infty)\), and the initialization \(D(0,p_\infty)=0\) is correct. Every other state is impossible at this stage and is correctly initialized to \(+\infty\).
	
	Assume that the table is correct just before a new vertex is processed. We show that the transition rules produce the correct table after processing that vertex. Since the temporary table \(D'\) is first initialized with value \(+\infty\) in every entry, a state receives a finite value only if it is produced by a valid transition from a reachable state of the current table.
	
	First, suppose that the next vertex is \(x_i\). Let \((s,p)\) be a reachable state in the current table. By the induction hypothesis, \(D(s,p)\) is the minimum weight of a feasible partial assignment with state \((s,p)\). Since \(x_i\) can receive only one of the values \(0,1,2\), we consider these three choices separately.
	
	If \(f(x_i)=0\), then \(x_i\) becomes pending. Indeed, every vertex of \(Y\) processed before \(x_i\) has right endpoint at most \(i-1\), and hence none of them is adjacent to \(x_i\). Therefore, \(x_i\) is not dominated by any processed vertex of \(Y\) assigned value \(2\). The earliest pending index becomes \(\min\{p,i\}\), while the first coordinate \(s\) remains unchanged. The weight does not increase. Thus, the transition from \((s,p)\) to \((s,\min\{p,i\})\) is correct.
	
	If \(f(x_i)=1\), then \(x_i\) is safe by itself. It does not become pending, and it does not create a new vertex of \(X\) assigned value \(2\). Hence the state remains \((s,p)\), and the weight increases by \(1\). Thus, the transition from \((s,p)\) to \((s,p)\) with additional cost \(1\) is correct.
	
	If \(f(x_i)=2\), then \(x_i\) is safe and becomes the rightmost processed vertex of \(X\) assigned value \(2\). Hence, the first coordinate becomes \(i\). The pending coordinate remains \(p\), because assigning value \(2\) to a vertex of \(X\) does not dominate pending vertices of \(X\); there are no edges inside \(X\). The weight increases by \(2\). Thus, the transition from \((s,p)\) to \((i,p)\) with additional cost \(2\) is correct.
	
	These three transitions are exhaustive for \(x_i\), since no other value is possible. Every retained transition extends a feasible partial assignment to another feasible partial assignment, and every feasible extension to include \(x_i\) is represented by exactly one of these three transitions. Therefore, no feasible possibility is lost while processing \(x_i\), and the algorithm keeps the minimum weight whenever several transitions reach the same state.
	
	Now suppose that the next vertex is \(y\in Y_i\). Since \(r(y)=i\), all neighbours of \(y\) have already been processed, and \(N(y)=\{x_{l(y)},x_{l(y)+1},\ldots,x_i\}\). Let \(l=l(y)\). Again, let \((s,p)\) be a reachable state in the current table. There are exactly three possible values for \(y\).
	
	If \(f(y)=0\), then \(y\) must have a neighbour assigned value \(2\). Since all neighbours of \(y\) lie in \(X\), this means that some vertex in the interval \(N(y)=\{x_l,\ldots,x_i\}\) must be assigned value \(2\). The state records the rightmost processed vertex of \(X\) assigned value \(2\), namely \(x_s\). Therefore, such a neighbour exists if and only if \(s\geq l\). If \(s\geq l\), then assigning \(f(y)=0\) is feasible, the state remains \((s,p)\), and the weight does not increase. If \(s<l\), then no processed vertex of \(X\) assigned value \(2\) lies in \(N(y)\). Since all neighbours of \(y\) have already been processed, no future vertex can later dominate \(y\). Hence, this transition is infeasible and is correctly discarded.
	
	If \(f(y)=1\), then \(y\) is safe by itself. A vertex assigned value \(1\) does not need to be dominated, and it does not dominate any pending vertex of \(X\). Hence, the state remains \((s,p)\), and the weight increases by \(1\). This transition is always feasible.
	
	Finally, suppose that \(f(y)=2\). Then \(y\) dominates exactly those pending vertices of \(X\) that lie in its interval \(N(y)=\{x_l,\ldots,x_i\}\). If \(p=p_\infty\), then there is no pending vertex, so the pending coordinate remains \(p_\infty\). If \(p\neq p_\infty\) and \(p\geq l\), then the earliest pending vertex \(x_p\) lies in \(N(y)\). Every later pending vertex has index at least \(p\), and every processed vertex of \(X\) has index at most \(i\). Therefore, every pending vertex lies in \(N(y)\), and all pending vertices become dominated. In this case, the new pending value becomes \(p_\infty\). If \(p<l\), then \(x_p\notin N(y)\). Hence, the earliest pending vertex remains undominated, and the pending coordinate remains \(p\). Although \(y\) may dominate some later pending vertices, this does not change the state, because the state records only the earliest pending vertex. Thus, the transition for \(f(y)=2\) is correct.
	
	The three transitions for \(y\) are exhaustive, since \(y\) can receive only one of the values \(0,1,2\). Every retained transition is valid, and every feasible choice for \(y\) is represented by one of these transitions. Therefore, no feasible possibility is lost while processing \(y\), and the algorithm keeps the minimum weight whenever several transitions reach the same state.
	
	It remains to justify that the state \((s,p)\) contains enough information for all future decisions. The value \(s\) is sufficient for deciding whether a processed vertex \(y\in Y_i\) can be assigned value \(0\). Indeed, \(y\) has a neighbour of value \(2\) in \(X\) exactly when the rightmost processed value-\(2\) vertex of \(X\) has index at least \(l(y)\). This is precisely the condition \(s\geq l(y)\). Thus, no other information about the set of value-\(2\) vertices in \(X\) is needed for checking vertices of \(Y\).
	
	The value \(p\) is sufficient for representing all pending vertices of \(X\). Let \(y\) be any future vertex of \(Y\). Since \(G\) is convex with respect to the ordering of \(X\), the set \(N(y)\cap X_i\) is either empty or a suffix of \(X_i\). Therefore, if \(y\) is assigned value \(2\) and dominates the earliest pending vertex \(x_p\), then it also dominates every later pending vertex. If \(y\) does not dominate \(x_p\), then \(x_p\) remains pending, regardless of what happens to later pending vertices. Hence, the future behaviour of the whole pending set is determined by its minimum index \(p\).
	
	For a fixed state \((s,p)\), there may be many feasible partial assignments realizing that state. The algorithm keeps only the minimum weight among them. This is safe because all future transitions depend only on \((s,p)\), not on the internal details of the partial assignment. Therefore, a heavier partial assignment with the same state can never lead to a better final solution than a lighter one with that state.
	
	Combining the above arguments, every table entry produced by the algorithm has the correct value, and every feasible partial assignment after the current step is represented in the new table. States that cannot be realized remain equal to \(+\infty\). Thus, the invariant holds after the processing step. The lemma follows by induction.
\end{proof}

\begin{theorem}\label{thm:roman_convex_correct}
	Algorithm \textsc{Roman-Convex-Bipartite} computes \(\gamma_R(G)\).
\end{theorem}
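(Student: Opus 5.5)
The proof will be a direct consequence of Lemma~\ref{RD_convex_correctness_lemma} applied after the last processing step. Once \(x_m\) and all vertices of \(Y_m\) have been processed, the processed set is \(X_m\cup Y_{\leq m}=X\cup Y=V(G)\). This holds because every \(y\in Y\) has \(r(y)\leq m\), so it lies in exactly one \(Y_i\) and is processed exactly once. I will therefore show that a function \(f:V(G)\to\{0,1,2\}\) is an RDF of \(G\) if and only if it is a feasible partial assignment on \(X_m\cup Y_{\leq m}\) whose state has second coordinate \(p_\infty\). The returned value is then exactly \(\gamma_R(G)\).

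For the forward direction, let \(f\) be an RDF. Every \(y\in Y\) with \(f(y)=0\) has a neighbour of value \(2\), and since \(N(y)\subseteq X=X_m\), the assignment \(f\) is feasible. Every \(x_t\) with \(f(x_t)=0\) has a neighbour of value \(2\), which lies in \(Y=Y_{\leq m}\), so \(t\notin P(f)\). Hence \(P(f)=\emptyset\) and the state of \(f\) is \((s(f),p_\infty)\) for some \(0\leq s(f)\leq m\). By Lemma~\ref{RD_convex_correctness_lemma}, \(D(s(f),p_\infty)\leq w(f)\), so the returned minimum is at most \(\gamma_R(G)\).

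For the converse, suppose the minimum is attained at some finite \(D(s,p_\infty)\). By Lemma~\ref{RD_convex_correctness_lemma}, there is a feasible partial assignment \(f\) on all of \(V(G)\) with state \((s,p_\infty)\) and weight \(D(s,p_\infty)\). Feasibility ensures that each \(y\) with \(f(y)=0\) has a neighbour of value \(2\). The condition \(P(f)=\emptyset\) ensures that each \(x_t\) with \(f(x_t)=0\) has a neighbour in \(Y\) of value \(2\). So \(f\) is an RDF, and \(\gamma_R(G)\leq D(s,p_\infty)\). Combining the two inequalities gives equality.

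I also need to check that the minimum is finite, i.e.\ some final state \((s,p_\infty)\) is reachable. This follows because the all-ones function is an RDF, so the forward direction applies to it. The only delicate point is the bookkeeping that the final table really covers all of \(V(G)\). In particular, the lemma must be invoked after the last vertex of \(Y_m\) has been handled, or after \(x_m\) itself if \(Y_m=\emptyset\). All substantive work is already contained in the lemma.
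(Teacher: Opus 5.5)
Your proposal is correct and follows the same route as the paper: both apply Lemma~\ref{RD_convex_correctness_lemma} to the final table, show that feasible complete assignments with state \((s,p_\infty)\) are exactly the Roman dominating functions of \(G\), and use the all-ones function to guarantee that some accepted entry is finite. Your explicit two-direction characterization (every RDF has \(P(f)=\emptyset\), and every feasible complete assignment with \(P(f)=\emptyset\) is an RDF) is just a tidier packaging of the paper's argument that final states with \(p\neq p_\infty\) are rejected while states \((s,p_\infty)\) represent Roman dominating functions.
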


\begin{proof}
	After all vertices have been processed, every vertex of \(Y\) assigned value \(0\) has already been checked at the moment it was processed. Indeed, when a vertex \(y\in Y_i\) is processed, all its neighbours lie in \(X_i\), and hence all of them have already been processed. Therefore, in every feasible final assignment, all vertices of \(Y\) satisfy the Roman domination condition.
	
	The only possible obstruction is a pending vertex of \(X\). If the final state is \((s,p)\) with \(p\neq p_\infty\), then, by the definition of the state, every assignment represented by this state has the vertex \(x_p\) still undominated. Since no future vertex remains, \(x_p\) can no longer be dominated. Hence, no final state with \(p\neq p_\infty\) can represent a Roman dominating function.
	
	On the other hand, if the final state is \((s,p_\infty)\), then the pending set is empty. Thus, no vertex of \(X\) remains undominated. There is no hidden pending vertex in such a state, because \(p=p_\infty\) is used only when the pending set is empty. Moreover, whenever a vertex of \(Y\) assigned value \(2\) dominates the earliest pending vertex, it also dominates every later pending vertex, since its neighbourhood in the processed part of \(X\) is a suffix. Therefore, a final state of the form \((s,p_\infty)\) represents a Roman dominating function of \(G\).
	
	At least one accepted final state always exists. The assignment that gives value \(1\) to every vertex of \(G\) is a Roman dominating function, because no vertex is assigned value \(0\). This assignment is considered by the algorithm by choosing \(f(x_i)=1\) for every \(x_i\in X\) and \(f(y)=1\) for every \(y\in Y\). It never creates a pending vertex and ends in the state \((0,p_\infty)\) with weight \(|V(G)|\). Hence, the set of final states of the form \((s,p_\infty)\) with finite table value is nonempty.
	
	By Lemma~\ref{RD_convex_correctness_lemma}, for each \(s\in\{0,1,\ldots,m\}\), the entry \(D(s,p_\infty)\) stores the minimum weight of a feasible final assignment with no pending vertex and with \(s\) as the rightmost processed vertex of \(X\) assigned value \(2\). Therefore, the minimum weight of a Roman dominating function of \(G\) is \(\min\{D(s,p_\infty):0\leq s\leq m\}\). This is exactly the value returned by Algorithm \textsc{Roman-Convex-Bipartite}. Hence, the algorithm computes \(\gamma_R(G)\).
\end{proof}

\subsection{Running Time Analysis}

\begin{theorem}\label{thm:roman_convex_runtime}
	Let \(G=(X\cup Y,E)\) be a connected convex bipartite graph on \(n\) vertices, given together with a convex ordering of \(X\) and the interval endpoints \(l(y)\) and \(r(y)\) for every vertex \(y\in Y\). Then Algorithm \textsc{Roman-Convex-Bipartite} computes \(\gamma_R(G)\) in \(O(n^3)\).
\end{theorem}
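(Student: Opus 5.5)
Correctness is already given by Theorem~\ref{thm:roman_convex_correct}, so only the running time remains to be bounded. The plan is to count how many states the table can hold, how many processing steps there are, and what each step costs, and then multiply. The state space is the set of pairs \((s,p)\) with \(s\in\{0,1,\ldots,m\}\) and \(p\in\{1,\ldots,m\}\cup\{p_\infty\}\). This gives at most \((m+1)^2=O(n^2)\) entries, since \(m\le n\). I will store \(D\) and \(D'\) as two \((m+1)\times(m+1)\) arrays, indexing \(p_\infty=m+1\) as the last column. Then reading or updating an entry costs \(O(1)\).

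The preprocessing steps come first. Setting \(p_\infty\) is \(O(1)\). Initializing the lists \(Y_1,\ldots,Y_m\) takes \(O(m)\), and bucketing every \(y\in Y\) into \(Y_{r(y)}\) takes \(O(|Y|)\), since the endpoints are part of the input. Initializing \(D\) takes \(O(m^2)\). Altogether, preprocessing is \(O(n^2)\).

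The main loop performs exactly one processing step per vertex. There are \(m\) steps for the vertices \(x_i\), and \(\sum_i|Y_i|=|Y|\) steps for the vertices of \(Y\), because each \(y\) lies in exactly one \(Y_i\). So there are \(m+|Y|=n\) steps in total. In each step, resetting \(D'\) to \(+\infty\) costs \(O(m^2)\). Scanning all states of \(D\) and testing \(D(s,p)<+\infty\) also costs \(O(m^2)\). For each reachable state, the work is constant: at most three updates, each computing \(\min\{p,i\}\) or the case test for \(p_2\) with a comparison against \(l(y)\), done in \(O(1)\). The assignment \(D=D'\) is either a copy in \(O(m^2)\) or a pointer swap in \(O(1)\). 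Hence each step costs \(O(m^2)=O(n^2)\), and the loop costs \(O(n\cdot n^2)=O(n^3)\).

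The final step takes the minimum over the \(m+1\) entries \(D(s,p_\infty)\), which is \(O(m)\). Summing preprocessing, the loop, and the final minimum gives \(O(n^3)\). The only point needing care is making sure that the per-step cost really is quadratic. In particular, the loop over \(y\in Y_i\) must not be charged as \(O(m)\) per \(x_i\) independently of \(|Y_i|\). The summation \(\sum_i |Y_i| = |Y|\) handles this, and no further structure of the graph is needed.
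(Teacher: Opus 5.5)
Your proposal is correct and follows essentially the same argument as the paper: you bound the number of states by \(O(m^2)\), charge constant work per state for each of the \(m+|Y|=n\) processed vertices, and add linear-time bucketing of \(Y\) by \(r(y)\), giving \(O((m+|Y|)m^2)=O(n^3)\). Your version is slightly more careful, since it explicitly charges the \(O(m^2)\) cost of resetting \(D'\) and scanning the whole table at every step, which the paper leaves implicit.
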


\begin{proof}
	Let \(m=|X|\) and \(q=|Y|\). Then \(n=m+q\). We first count the number of possible states. After the algorithm has processed up to \(x_i\), the first coordinate \(s\) belongs to \(\{0,1,\ldots,i\}\), where \(s=0\) means that no processed vertex of \(X\) has value \(2\). The second coordinate \(p\) belongs to \(\{1,\ldots,i\}\cup\{p_\infty\}\), where \(p=p_\infty\) means that there is no pending vertex. Hence, for each fixed \(i\), the number of possible states is at most \((i+1)^2\), which is \(O(m^2)\).
	
	Each vertex \(x_i\in X\) is processed once. For every reachable state \((s,p)\), the algorithm tries three choices, corresponding to \(f(x_i)=0\), \(f(x_i)=1\), and \(f(x_i)=2\). Each transition updates only a constant number of table entries and therefore takes constant time. Since there are \(m\) vertices in \(X\) and \(O(m^2)\) possible states at each step, the total time spent on the vertices of \(X\) is \(O(m^3)\).
	
	Each vertex \(y\in Y\) is also processed once. For every reachable state \((s,p)\), the algorithm considers at most three choices, corresponding to \(f(y)=0\), \(f(y)=1\), and \(f(y)=2\). The transition for \(f(y)=0\) only checks whether \(s\geq l(y)\). The transition for \(f(y)=2\) only checks whether \(p\neq p_\infty\) and \(p\geq l(y)\). Thus, each transition takes constant time. Since each vertex \(y\in Y\) is processed once and there are \(O(m^2)\) possible states, the total time spent on the vertices of \(Y\) is \(O(qm^2)\).
	
	It remains only to group the vertices of \(Y\) into the sets \(Y_i\). If the interval endpoints are already given, this grouping can be done in \(O(m+q)\) time by first initializing all lists \(Y_i\) as empty and then placing each vertex \(y\) into the list indexed by \(r(y)\). If the endpoints are computed from adjacency lists, then this preprocessing takes \(O(|E|)\) time. Since \(|E|\leq mq\leq n^2\), this preprocessing does not dominate the dynamic programming time.
	
	Therefore, the total running time is \(O(m^3+qm^2+|E|)\). Since \(m^3+qm^2=(m+q)m^2\), this is \(O((m+q)m^2+|E|)\). Now \(m+q=n\), \(m\leq n\), and \(|E|\leq n^2\). Hence, the total running time is \(O(n^3)\).
\end{proof}

\begin{remark}
	If an optimal Roman dominating function is required, the algorithm can store a predecessor pointer whenever a table entry is improved. Starting from a final state \((s,p_\infty)\) attaining the returned minimum, one can then backtrack through the stored choices and recover an optimal Roman dominating function. This does not change the asymptotic running time.
\end{remark}

\begin{corollary}\label{cor:roman_convex_all}
	The Roman domination number of an \(n\)-vertex convex bipartite graph, not necessarily connected, can be computed in \(O(n^3)\) time.
\end{corollary}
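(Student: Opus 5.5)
The plan is to reduce the disconnected case to the connected case handled by Theorem~\ref{thm:roman_convex_correct} and Theorem~\ref{thm:roman_convex_runtime}, using additivity of \(\gamma_R\) over connected components (noted in the Preliminaries).

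\textbf{Step 1: decompose the graph.} Let \(G\) have components \(G_1,\ldots,G_k\) with \(n_j=|V(G_j)|\), so \(\sum_j n_j=n\). The components can be found by breadth-first search in \(O(n+|E|)=O(n^2)\) time.

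\textbf{Step 2: check that the Roman domination number is additive.} An RDF of \(G\) restricts to an RDF of each \(G_j\), because every neighbour of a vertex lies in that vertex's own component. Conversely, RDFs of the components combine to an RDF of \(G\). Hence
\[
\gamma_R(G)=\sum_{j=1}^{k}\gamma_R(G_j).
\]

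\textbf{Step 3: handle the trivial components.} An isolated vertex contributes \(1\): it cannot be \(0\), since it has no neighbour of value \(2\), and value \(1\) suffices. This covers isolated vertices of both \(X\) and \(Y\), which the algorithm's hypotheses exclude.

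\textbf{Step 4: check that each nontrivial component is a valid input.} Each nontrivial \(G_j\) is a connected convex bipartite graph. Restricting the ordering of \(X\) to \(X\cap V(G_j)\) keeps every neighbourhood \(N(y)\) consecutive, since \(N(y)\subseteq V(G_j)\) for \(y\in V(G_j)\). The new endpoints \(l(y),r(y)\) are obtained by reindexing through a rank array. This costs \(O(n)\) in total, plus \(O(|E|)\) if the endpoints must be computed from adjacency lists.

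\textbf{Step 5: run the algorithm and sum.} Apply Algorithm \textsc{Roman-Convex-Bipartite} to each nontrivial component. By Theorem~\ref{thm:roman_convex_correct} it returns \(\gamma_R(G_j)\), and by Theorem~\ref{thm:roman_convex_runtime} it does so in \(O(n_j^3)\) time. The total time is therefore
\[
O\Bigl(n^2+\sum_j n_j^3\Bigr)\subseteq O(n^3),
\]
using \(\sum_j n_j^3\le (\sum_j n_j)^3\).

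\textbf{Main obstacle.} The only delicate point is confirming that the restricted ordering remains convex and that the reindexed endpoints are correct. This follows because consecutiveness is preserved when restricting a linear order to a subset containing the whole interval. If no convex ordering is given, one would first compute one in linear time, for example with PQ-trees; I would take the ordering as given, as in Theorem~\ref{thm:roman_convex_runtime}.
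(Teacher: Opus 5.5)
Your proposal is correct and follows the paper's proof: additivity of $\gamma_R$ over components, value $1$ for each isolated vertex, the algorithm run on each nontrivial component with the inherited convex ordering, and the bound $\sum_j O(n_j^3)\subseteq O(n^3)$. Your extra details (the $O(n^2)$ cost of finding components, and reindexing the endpoints so that each run's state space depends on $|X\cap V(G_j)|$ rather than on $m$) just make explicit points the paper leaves implicit.
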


\begin{proof}
	The Roman domination number is additive over connected components. Therefore, we apply Algorithm \textsc{Roman-Convex-Bipartite} independently to each nontrivial connected component, using the inherited convex ordering. Each isolated vertex contributes \(1\) to the Roman domination number. If the component sizes are \(n_1,n_2,\ldots,n_t\), then the total running time is at most \(\sum_{j=1}^{t}O(n_j^3)\leq O(n^3)\). Hence, the whole graph can be handled in \(O(n^3)\) time.
\end{proof}

\subsection{Example}

We illustrate the algorithm on a small convex bipartite graph. Let \(X=\{x_1,x_2,x_3,x_4\}\) and \(Y=\{y_1,y_2,y_3\}\). Suppose that \(N(y_1)=\{x_1,x_2,x_3\}\), \(N(y_2)=\{x_2,x_3,x_4\}\), and \(N(y_3)=\{x_3,x_4\}\). Then \(G\) is convex with respect to the ordering \(x_1,x_2,x_3,x_4\) of \(X\). The right endpoints are \(r(y_1)=3\), \(r(y_2)=4\), and \(r(y_3)=4\). Hence \(Y_1=Y_2=\emptyset\), \(Y_3=\{y_1\}\), and \(Y_4=\{y_2,y_3\}\). Since \(m=4\), we have \(p_\infty=m+1=5\).

\begin{figure}[h]
	\centering
	\begin{tikzpicture}[
		xnode/.style={circle, draw, minimum size=7mm, inner sep=0pt},
		ynode/.style={circle, draw, minimum size=7mm, inner sep=0pt},
		opt2/.style={circle, draw, fill=gray!30, very thick, minimum size=7mm, inner sep=0pt},
		every node/.style={font=\small}
		]
		
		\node[xnode] (x1) at (0,2) {\(x_1\)};
		\node[xnode] (x2) at (2,2) {\(x_2\)};
		\node[xnode] (x3) at (4,2) {\(x_3\)};
		\node[opt2] (x4) at (6,2) {\(x_4\)};
		
		\node[opt2] (y1) at (1,0) {\(y_1\)};
		\node[ynode] (y2) at (3,0) {\(y_2\)};
		\node[ynode] (y3) at (5,0) {\(y_3\)};
		
		\draw (x1)--(y1);
		\draw (x2)--(y1);
		\draw (x3)--(y1);
		
		\draw (x2)--(y2);
		\draw (x3)--(y2);
		\draw (x4)--(y2);
		
		\draw (x3)--(y3);
		\draw (x4)--(y3);
		
		\node at (-0.7,2) {\(X\)};
		\node at (0.3,0) {\(Y\)};
		
		\node[above] at (x1.north) {\(0\)};
		\node[above] at (x2.north) {\(0\)};
		\node[above] at (x3.north) {\(0\)};
		\node[above] at (x4.north) {\(2\)};
		
		\node[below] at (y1.south) {\(2\)};
		\node[below] at (y2.south) {\(0\)};
		\node[below] at (y3.south) {\(0\)};
		
	\end{tikzpicture}
	\caption{A convex bipartite graph used to illustrate the algorithm. The shaded vertices are assigned value \(2\) in an optimal Roman dominating function: \(f(y_1)=2\) and \(f(x_4)=2\). All other vertices receive value \(0\).}
	\label{fig:roman-convex-example}
\end{figure}

The algorithm processes the vertices in the order \(x_1,x_2,x_3,y_1,x_4,y_2,y_3\). Recall that a state is written as \((s,p)\), where \(s\) is the rightmost processed vertex of \(X\) assigned value \(2\), and \(p\) is the earliest pending vertex of \(X\). The value stored in a table entry is the minimum weight needed to reach that state.

Before each vertex is processed, the current table is denoted by \(D\). The algorithm then initializes a temporary table \(D'\), writes all valid transitions into \(D'\), and finally replaces \(D\) by \(D'\). The following table follows one optimal branch and explicitly shows the temporary entry written in \(D'\). After each processing step, the entry shown in \(D'\) becomes the corresponding entry of \(D\) for the next step.

\begin{center}
	\small
	\begin{tabular}{c|c|c|c}
		Step & Current entry in \(D\) & Chosen value & Entry written in \(D'\) \\
		\hline
		Initial & \(D(0,p_\infty)=0\) & -- & -- \\
		\hline
		Processing \(x_1\) & \(D(0,p_\infty)=0\) & \(f(x_1)=0\) & \(D'(0,1)=0\) \\
		Processing \(x_2\) & \(D(0,1)=0\) & \(f(x_2)=0\) & \(D'(0,1)=0\) \\
		Processing \(x_3\) & \(D(0,1)=0\) & \(f(x_3)=0\) & \(D'(0,1)=0\) \\
		Processing \(y_1\) & \(D(0,1)=0\) & \(f(y_1)=2\) & \(D'(0,p_\infty)=2\) \\
		Processing \(x_4\) & \(D(0,p_\infty)=2\) & \(f(x_4)=2\) & \(D'(4,p_\infty)=4\) \\
		Processing \(y_2\) & \(D(4,p_\infty)=4\) & \(f(y_2)=0\) & \(D'(4,p_\infty)=4\) \\
		Processing \(y_3\) & \(D(4,p_\infty)=4\) & \(f(y_3)=0\) & \(D'(4,p_\infty)=4\)
	\end{tabular}
\end{center}

Let us explain the entries in the table. Initially, the table contains only \(D(0,p_\infty)=0\). If we assign \(f(x_1)=0\), then \(x_1\) becomes pending. Hence, the temporary table receives the entry \(D'(0,1)=0\). After all transitions for \(x_1\) are completed, the algorithm sets \(D=D'\), so the current table contains \(D(0,1)=0\).

The same happens when \(x_2\) and \(x_3\) are assigned value \(0\). The earliest pending vertex remains \(x_1\), so the branch keeps the state \((0,1)\) with value \(0\). Thus, after processing \(x_3\), the branch has \(D(0,1)=0\).

Next, the algorithm processes \(y_1\). Since \(N(y_1)=\{x_1,x_2,x_3\}\), assigning \(f(y_1)=2\) dominates all pending vertices. Therefore, the pending coordinate changes from \(1\) to \(p_\infty\). The temporary table receives \(D'(0,p_\infty)=D(0,1)+2=2\). After copying, the current table contains \(D(0,p_\infty)=2\).

The next vertex is \(x_4\). Assigning \(f(x_4)=2\) makes \(x_4\) the rightmost processed vertex of \(X\) assigned value \(2\). Thus, the first coordinate changes from \(0\) to \(4\), the pending coordinate remains \(p_\infty\), and the temporary table receives \(D'(4,p_\infty)=D(0,p_\infty)+2=4\). After copying, the current table contains \(D(4,p_\infty)=4\).

Now \(y_2\) is processed. Since \(l(y_2)=2\) and \(s=4\), we have \(s\geq l(y_2)\). Hence, \(y_2\) is adjacent to a processed vertex of \(X\) assigned value \(2\), namely \(x_4\). Therefore, assigning \(f(y_2)=0\) is allowed, and the temporary table receives \(D'(4,p_\infty)=4\). The same argument applies to \(y_3\), since \(l(y_3)=3\) and \(s=4\geq 3\). Thus, assigning \(f(y_3)=0\) is also allowed, and the final accepted value on this branch is \(4\).

The table above follows only one optimal branch. The full temporary table \(D'\) may contain several finite entries after each processed vertex. The following table lists representative finite entries of \(D'\) immediately before the assignment \(D=D'\) is made. These entries show how the algorithm keeps both complete and incomplete partial assignments.

\begin{center}
	\small
	\begin{tabular}{p{0.22\linewidth}|p{0.68\linewidth}}
		After processing & Some finite entries of the temporary table \(D'\) before setting \(D=D'\) \\
		\hline
		\(x_1\) & \(D'(0,1)=0\), \(D'(0,p_\infty)=1\), \(D'(1,p_\infty)=2\) \\
		\(x_2\) & \(D'(0,1)=0\), \(D'(0,2)=1\), \(D'(1,2)=2\), \(D'(2,1)=2\) \\
		\(x_3\) & \(D'(0,1)=0\), \(D'(0,2)=1\), \(D'(0,3)=2\), \(D'(3,1)=2\) \\
		\(y_1\) & \(D'(0,p_\infty)=2\), \(D'(0,1)=1\), \(D'(3,1)=2\), \(D'(1,2)=2\) \\
		\(x_4\) & \(D'(4,p_\infty)=4\), \(D'(4,1)=3\), \(D'(0,1)=1\), \(D'(0,4)=2\) \\
		\(y_2\) & \(D'(4,p_\infty)=4\), \(D'(4,1)=3\), \(D'(2,1)=2\), \(D'(3,1)=2\) \\
		\(y_3\) & \(D'(4,p_\infty)=4\), \(D'(4,1)=3\), \(D'(3,1)=2\)
	\end{tabular}
\end{center}

For instance, after processing \(x_4\), the entry \(D'(4,1)=3\) can be obtained by keeping \(x_1\) pending and assigning value \(2\) to \(x_4\). This entry has a smaller weight than \(D'(4,p_\infty)=4\), but it does not represent a complete solution, because the pending vertex \(x_1\) is still undominated. The later vertices \(y_2\) and \(y_3\) cannot dominate \(x_1\), since their intervals start at \(x_2\) and \(x_3\), respectively. Hence, this branch cannot produce an accepted final state of weight \(3\).

At the end, only states of the form \((s,p_\infty)\) are accepted, because no vertex of \(X\) may remain pending after all vertices have been processed. In this example, the minimum accepted value is \(D(4,p_\infty)=4\). Therefore, the algorithm returns \(\gamma_R(G)=4\). The corresponding Roman dominating function is the one shown in Figure~\ref{fig:roman-convex-example}: \(f(y_1)=2\), \(f(x_4)=2\), and all other vertices receive value \(0\).

%% file: RD_Chordal_Bipartite.tex
\section{NP-completeness on Chordal Bipartite Graphs}
\label{sec:NPC_ChordalBG}
In this section, we prove that the Roman domination problem remains hard even when the input graph is restricted to chordal bipartite graphs. We reduce from \textsc{Dominating Set}, which is NP-complete on chordal bipartite graphs~\cite{muller1987np}.

\begin{theorem}
	The decision version of \textsc{Roman Domination} is NP-complete on chordal bipartite graphs.
\end{theorem}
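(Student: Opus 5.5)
Membership in NP is immediate: a labeling \(f:V(G)\to\{0,1,2\}\) serves as a certificate. We can check in linear time that its weight is at most \(k\) and that every vertex with value \(0\) has a neighbour with value \(2\). For hardness we reduce from \textsc{Dominating Set} on chordal bipartite graphs, which is NP-complete~\cite{muller1987np}, using the pendant construction announced in the introduction. Given a chordal bipartite graph \(G\) with \(V(G)=\{v_1,\ldots,v_n\}\) and an integer \(k\), we build \(G'\) by adding, for each \(v_j\), a new vertex \(u_j\) adjacent only to \(v_j\). We then ask whether \(\gamma_R(G')\le n+k\). The construction is clearly polynomial.

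Next we check that \(G'\) is chordal bipartite. It is bipartite, since each \(u_j\) can be placed in the class opposite to \(v_j\). Any induced cycle of \(G'\) consists of vertices of degree at least two, so it avoids every pendant vertex \(u_j\). Hence it is an induced cycle of \(G\) and therefore has length four.

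The core of the proof is the identity \(\gamma_R(G')=n+\gamma(G)\), which we prove in two directions.

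For \(\gamma_R(G')\le n+\gamma(G)\), take a minimum dominating set \(D\) of \(G\). Set \(f(v)=2\) for \(v\in D\) and \(f(v)=0\) for \(v\in V(G)\setminus D\). For each pendant vertex, set \(f(u_j)=0\) if \(v_j\in D\) and \(f(u_j)=1\) otherwise. Every zero vertex \(v\in V(G)\setminus D\) has a neighbour in \(D\), and every zero pendant \(u_j\) has its neighbour \(v_j\in D\). So \(f\) is an RDF of weight \(2|D|+(n-|D|)=n+\gamma(G)\).

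For the reverse inequality \(\gamma(G)\le\gamma_R(G')-n\), which I expect to be the main step, start from a minimum RDF \(f\) of \(G'\). First normalize it: if some \(f(u_j)=2\), reassign \(f(u_j)=0\) and \(f(v_j)=2\). This does not increase the weight and keeps \(f\) an RDF, because \(N(u_j)\subseteq N[v_j]\). After this step every pendant has value \(0\) or \(1\). Let \(D=\{v_j: f(v_j)=2\}\). For each \(j\) consider the pair weight \(f(v_j)+f(u_j)\):
\begin{itemize}
\item if \(v_j\in D\), the pair weight is at least \(2\);
\item if \(v_j\notin D\), then \(u_j\) cannot be \(0\), since its only neighbour \(v_j\) does not have value \(2\); so \(f(u_j)=1\) and the pair weight is at least \(1\).
\end{itemize}
Summing over all pairs gives \(w(f)\ge n+|D|\). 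It remains to repair domination. Let \(B\) be the set of vertices \(v_j\notin D\) with \(f(v_j)=1\) that have no neighbour in \(D\). For such a \(v_j\) the pair weight is \(f(v_j)+f(u_j)=2\), one more than the bound used above. Therefore \(D\cup B\) is a dominating set of \(G\): every \(v_j\) with \(f(v_j)=0\) has a \(G\)-neighbour in \(D\), because \(u_j\) cannot be its value-\(2\) neighbour after normalization. Moreover \(w(f)\ge n+|D|+|B|\). Hence \(\gamma(G)\le |D\cup B|\le\gamma_R(G')-n\).

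Combining the two directions, \(\gamma(G)\le k\) if and only if \(\gamma_R(G')\le n+k\), which completes the reduction.
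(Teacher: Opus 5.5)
Your proof is correct and follows the paper's argument essentially step by step. You use the same pendant-vertex reduction from \textsc{Dominating Set}, the same normalization moving a value \(2\) from \(u_j\) to \(v_j\), and the same pair-weight count giving \(\gamma_R(G')=n+\gamma(G)\). The only difference is that the paper takes the dominating set to be all of \(\{v_j: f(v_j)>0\}\): every value-\(1\) vertex already forces its pair to weigh \(2\), so your auxiliary set \(B\) is unnecessary, though your smaller choice \(D\cup B\) is equally valid.
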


\begin{proof}
	First, the problem belongs to NP. Given a graph \(G\), an integer \(k\), and a function \(f:V(G)\rightarrow\{0,1,2\}\), one can check in polynomial time whether \(w(f)\leq k\) and whether every vertex assigned value \(0\) has a neighbour assigned value \(2\). Thus, \textsc{Roman Domination} belongs to NP.
	
	We prove NP-hardness by a polynomial reduction from \textsc{Dominating Set} on chordal bipartite graphs. Let \((G,k)\) be an instance of \textsc{Dominating Set}, where \(G\) is a chordal bipartite graph. Let \(V(G)=\{v_1,v_2,\ldots,v_n\}\). We construct a graph \(H\) from \(G\) by adding one pendant vertex \(v_i'\) adjacent only to \(v_i\), for every \(i\in\{1,\ldots,n\}\). Thus, \(V(H)=V(G)\cup\{v_1',v_2',\ldots,v_n'\}\) and \(E(H)=E(G)\cup\{v_iv_i':1\leq i\leq n\}\). The construction is illustrated in Figure~\ref{fig:cb-roman-reduction}.
	
	\begin{figure}[h]
		\centering
		\resizebox{0.90\linewidth}{!}{%
		\begin{tikzpicture}[
			vtx/.style={circle, draw, minimum size=7mm, inner sep=0pt},
			leaf/.style={circle, draw, fill=gray!20, minimum size=7mm, inner sep=0pt},
			every node/.style={font=\small}
			]
			
			\node at (1.5,2.2) {\(G\)};
			
			\node[vtx] (g1) at (0,1.2) {\(v_1\)};
			\node[vtx] (g2) at (1,0.4) {\(v_2\)};
			\node[vtx] (g3) at (2,1.2) {\(v_3\)};
			\node[vtx] (g4) at (3,0.4) {\(v_n\)};
			
			\draw (g1)--(g2);
			\draw (g2)--(g3);
			\draw[dashed] (g3)--(g4);
			
			\node at (1.5,-0.5) {Original graph};
			
			\draw[->, thick] (4.0,0.8) -- (5.4,0.8);
			\node at (4.7,1.25) {attach leaves};
			
			\node at (7.5,3.0) {\(H\)};
			
			\node[vtx] (h1) at (6.4,1.2) {\(v_1\)};
			\node[vtx] (h2) at (7.4,0.4) {\(v_2\)};
			\node[vtx] (h3) at (8.4,1.2) {\(v_3\)};
			\node[vtx] (h4) at (9.4,0.4) {\(v_n\)};
			
			\draw (h1)--(h2);
			\draw (h2)--(h3);
			\draw[dashed] (h3)--(h4);
			
			\node[leaf] (h1p) at (6.4,2.0) {\(v_1'\)};
			\node[leaf] (h2p) at (7.4,-0.4) {\(v_2'\)};
			\node[leaf] (h3p) at (8.4,2.0) {\(v_3'\)};
			\node[leaf] (h4p) at (9.4,-0.4) {\(v_n'\)};
			
			\draw (h1)--(h1p);
			\draw (h2)--(h2p);
			\draw (h3)--(h3p);
			\draw (h4)--(h4p);
			
			\node at (8.0,-0.95) {One pendant vertex is added to each \(v_i\)};
			
		\end{tikzpicture}
		}
		\caption{Reduction from \textsc{Dominating Set} to \textsc{Roman Domination}. From the chordal bipartite graph \(G\), we construct \(H\) by attaching one pendant vertex \(v_i'\) to each vertex \(v_i\in V(G)\).}
		\label{fig:cb-roman-reduction}
	\end{figure}
	
	We first observe that \(H\) is chordal bipartite. If \(G\) has a bipartition \((A,B)\), then each pendant vertex attached to a vertex of \(A\) is placed in \(B\), and each pendant vertex attached to a vertex of \(B\) is placed in \(A\). Hence, \(H\) is bipartite. Moreover, adding pendant vertices creates no new cycle. Therefore, every induced cycle of \(H\) is already an induced cycle of \(G\). Since \(G\) is chordal bipartite, it has no induced cycle of length at least \(6\). Thus, \(H\) is also chordal bipartite.
	
	We claim that \(\gamma_R(H)=n+\gamma(G)\), where \(\gamma(G)\) is the domination number of \(G\). First, let \(D\) be a dominating set of \(G\). We define \(f:V(H)\rightarrow\{0,1,2\}\) as follows. For each \(v_i\in D\), set \(f(v_i)=2\) and \(f(v_i')=0\). For each \(v_i\notin D\), set \(f(v_i)=0\) and \(f(v_i')=1\). The weight of \(f\) is \(2|D|+(n-|D|)=n+|D|\).
	
	We check that \(f\) is a Roman dominating function of \(H\). If \(v_i'\) has value \(0\), then \(v_i\in D\), and hence \(f(v_i)=2\). Thus, \(v_i'\) is dominated. If \(v_i\notin D\), then \(f(v_i)=0\). Since \(D\) is a dominating set of \(G\), the vertex \(v_i\) has a neighbor \(v_j\in D\), and \(f(v_j)=2\). Thus, \(v_i\) is dominated. Therefore, every vertex assigned value \(0\) has a neighbour assigned value \(2\), and so \(f\) is a Roman dominating function of \(H\). Hence, \(\gamma_R(H)\leq n+\gamma(G)\).
	
	Conversely, let \(f\) be a minimum Roman dominating function of \(H\). We may assume that no pendant vertex \(v_i'\) has value \(2\). Indeed, suppose that \(f(v_i')=2\) for some \(i\). We replace the values on the pair \(\{v_i,v_i'\}\) by setting \(f(v_i)=2\) and \(f(v_i')=0\). This change does not increase the weight. It also preserves the Roman domination property. The only vertex that could have used \(v_i'\) as a value-\(2\) neighbor is \(v_i\), and after the replacement \(v_i\) itself has value \(2\), while \(v_i'\) is dominated by \(v_i\). Applying this replacement whenever necessary, we obtain a minimum Roman dominating function in which no pendant vertex has value \(2\).
	
	Define \(D=\{v_i\in V(G):f(v_i)>0\}\). We show that \(D\) is a dominating set of \(G\). Let \(v_i\notin D\). Then \(f(v_i)=0\). Since no pendant vertex has value \(2\), the pendant vertex \(v_i'\) cannot dominate \(v_i\). Therefore, because \(f\) is a Roman dominating function of \(H\), the vertex \(v_i\) must have a neighbor \(v_j\in V(G)\) with \(f(v_j)=2\). In particular, \(v_j\in D\). Hence, every vertex outside \(D\) is adjacent to a vertex of \(D\), and therefore, \(D\) is a dominating set of \(G\).
	
	It remains to compare the weights. For each \(i\), the pair \(\{v_i,v_i'\}\) contributes at least one unit of weight. Indeed, if both \(v_i\) and \(v_i'\) had value \(0\), then \(v_i'\) would not be dominated, because its only neighbour is \(v_i\). Moreover, if \(v_i\in D\), then \(f(v_i)>0\), and the pair \(\{v_i,v_i'\}\) contributes at least two units of weight. If \(f(v_i)=2\), this is immediate. If \(f(v_i)=1\), then \(v_i'\) cannot have value \(0\), because its only neighbor \(v_i\) has value \(1\), not value \(2\). Hence, \(v_i'\) must have value at least \(1\). Thus, every pair contributes at least one unit, and each pair corresponding to a vertex of \(D\) contributes at least one additional unit. Therefore, \(w(f)\geq n+|D|\). Since \(D\) is a dominating set of \(G\), we have \(|D|\geq \gamma(G)\). Hence, \(w(f)\geq n+\gamma(G)\). Thus, \(\gamma_R(H)\geq n+\gamma(G)\).
	
	Combining both inequalities, we obtain \(\gamma_R(H)=n+\gamma(G)\). Therefore, \(G\) has a dominating set of size at most \(k\) if and only if \(H\) has a Roman dominating function of weight at most \(n+k\). The construction is polynomial and preserves chordal bipartiteness. Hence, the decision version of \textsc{Roman Domination} is NP-hard on chordal bipartite graphs. Since the problem also belongs to NP, it is NP-complete on chordal bipartite graphs.
\end{proof}